\documentclass[12pt]{article}
\usepackage{amsmath,amssymb,amsthm,latexsym,euscript,amscd}
\usepackage[english]{babel}
\usepackage[all]{xy}


 \tolerance 8000


\newtheorem{Theorem}{Theorem}
\newtheorem{Proposition}[Theorem]{Proposition}

\newtheorem{Lemma}[Theorem]{Lemma}

\newtheorem{Conjecture}[Theorem]{Conjecture}

 \topmargin = 0 mm
\textheight = 220mm \oddsidemargin = -5 mm \textwidth = 170mm
\binoppenalty = 10000

\newcommand{\g}{{\mathfrak g}}
\newcommand{\gl}{{\mathfrak gl}}
\newcommand{\ttt}{{\mathfrak t}}


\begin{document}

\title{Symplectic geometry of unbiasedness and critical points of a potential}
\author{Alexey Bondal\thanks{ Steklov Institute of Mathematics, Moscow, and Kavli Institute for the Physics and Mathematics of the Universe (WPI), The University of Tokyo, Kashiwa, Chiba 277-8583, Japan, and National Research University Higher School of Economics, Russian
Federation, and The Institute for Fundamental Science, Moscow}~ and Ilya Zhdanovskiy\thanks{MIPT, Moscow, and HSE Laboratory of algebraic geometry, Moscow}}

\date{}

\maketitle

\begin{abstract}

The goal of these notes is to show that the classification problem of algebraically unbiased system of projectors has an interpretation in symplectic geometry. This leads us to a description of the moduli space of algebraically unbiased bases as critical points of a potential functions, which is a Laurent polynomial in suitable coordinates. The Newton polytope of the Laurent polynomial is the classical Birkhoff polytope, the set of double stochastic matrices. Mirror symmetry interprets the polynomial as a Landau-Ginzburg potential for corresponding Fano variety and relates the symplectic geometry of the variety with systems of unbiased projectors.
\end{abstract}

\section{Introduction}
The classification problem of systems of unbiased bases attracted a lot of attention in physics literature.
In its algebraic version, this is about complete systems of orthogonal rank 1 projectors $\{p_i\}$ and $\{q_j\}$ in a finite dimensional vector space of dimension $n$ satisfying the condition ${\rm Tr}p_iq_j=\frac 1n$. An equivalent formulation of the problem is to classify pair of Cartan subalgebras in the Lie algebra $sl(n)$ which are orthogonal with resect to Killing form. It is also important to classify bigger sets of systems of projectors which are pairwise algebraically unbiased.

In \cite{BZh} we introduced algebras $B_{\Lambda}(\Gamma )$, where $\Gamma$ is a simply laced graph with weights of edges $\Lambda$, and gave an interpretation of this problem in the context of representation theory of these algebras. For the case of two systems of projectors the graph is the full bipartite graph with $n$ vertices in both rows, all weights on edges are $\frac 1n$, and representations under consideration are of dimension $n$. More general weights appear when we put condition ${\rm Tr}p_iq_j=\lambda_{ij}$. Algebras happened to be homotopes of Poincare groupoids of graphs associated to discrete Laplace operators. In that respect the theory is closely related to discrete harmonic analysis on graphs and to the gluing of t-structures in the style of \cite{BBD}.

In this paper, we develop the symplectic approach to the problem. The symplectic structure on the variety of projectors of rank 1, considered as a coadjoint orbit, is relevant to the problem. If we assume orthogonal projectors $q_j$ to be fixed and vary the projectors $p_i$ without requiring them to be orthogonal, then the equations on unbiased systems distinguish a Lagrangian fibre of an integrable system on the product $Y$ of coadjoint orbits corresponding to projectors. The locus of systems of orthogonal projectors defines another Lagrangian subvariety $X$ in $Y$. The problem is then to find the locus of intersection of these two Lagrangian subvarieties. We construct a symplectic open embedding of the cotangent bundle to $X$ into $Y$.

The Lagrangian fiber is then interpreted as a (multi-valued) section of the cotangent bundle to $X$ given as a differential of a potential function on $X$.
If the transition matrix from projectors $\{q_i\}$ to $\{p_i\}$ is $g$, then the exponent of the function is:
$$
E=\frac{{\rm det}g}{\prod (g_{ij})^{\frac 1n}}
$$
It is a Laurent polynomial in a suitable system of coordinates. Critical points of the function are solutions to our problem.

We consider also some generalizations to the case of non-complete system of projectors $\{p_i\}$ $i=1,\dots , k$ and complete systems of projectors $\{q_j\}$, $j=1,\dots , n$. The corresponding graph $\Gamma$ is the full bipartite graph $\Gamma_{kn}$ with $k$ vertices in one row and $n$ vertices in the other. Also we allow arbitrary constants $\lambda_{ij}$ instead of $\frac 1n$ in the above equation for traces. For our analysis, we introduce another algebra $A_{n,{\bar \Lambda}}$ together with a homomorphism $\psi: A_{n,{\bar \Lambda }}\to B_{\bf \Lambda}(\Gamma_{kn} )$, which corresponds to summation of projectors $p_i$ to a projector of rank $k$. We prove that the natural symplectic form that exists on the moduli of representations of $A_{n,{\bar \Lambda}}$ goes to zero when restricted to the moduli of $B_{\bf \Lambda}(\Gamma_{kn})$ representaions. We conjecture that the image of every component of one moduli is Lagrangian in the other moduli.

Critical points of Laurent polynomials appear in the study of mirror symmetry for toric varieties and their deformations. The cones of the fan of the toric variety are spanned by faces of the Newton polytope of the Laurent polynomial. In our case this polytope is the classical Birkhoff polytope. Its points are double stochastic matrices studied in probability theory. The corresponding toric variety is Gorenstein Fano variety with terminal singularities. According to mirror symmetry, the symplectic geometry of this variety is captured by critical points of the potential function under an appropriate choice of symplectic form and desingularization of the variety.

This connection deserve further study.

{\bf Acknowledgements.} 
We are grateful to Alexei Rosly for useful discussions. This work was partly done during authors visit to Kavli IPMU and was supported by World Premier International Research Center Initiative (WPI Initiative), MEXT, Japan.
The reported study was partially supported by RFBR, research projects 13-01-00234, 14-01-00416 and 15-51-50045. The article was prepared within the framework of a subsidy granted to the HSE by the Government of the Russian Federation for the implementation of the Global Competitiveness Program.

\section{Algebraically unbiased projectors and representation theory}
\label{sectprojectors}
Here we recall the notion of algebraically unbiased projectors and show how to interpret them in terms of representations of suitable algebras associated to graphs.

\subsection{Algebraically unbiased projectors and orthogonal Cartan subalgebras}
\label{aup}

Let $V$ be a $n$-dimensional space over field of characteristic zero.


Two minimal (i.e. rank 1) projectors $p$ and $q$ in $V$ are said to be {\it algebraically unbiased} if
\begin{equation}
\label{unb} {\rm Tr}(pq) = \frac{1}{n}
\end{equation}
Equivalently, this reads as one of the two equivalent algebraic relations:
\begin{equation}\label{aunbias1}
pqp = \frac{1}{n}p,
\end{equation}
\begin{equation}\label{aunbias2}
qpq = \frac{1}{n}q.
\end{equation}

We will also consider {\it
orthogonal} projectors. Orthogonality of $p$ and $q$ is algebraically expressed as
\begin{equation}\label{orthog}
pq = qp = 0
\end{equation}

Two complete (i.e. of cardinality $n$) systems of minimal orthogonal projectors $(p_1,...,p_n)$ and $(q_1,...,q_n)$  are said to be {\em algebraically unbiased} if
$p_i$ and $q_j$ are algebraically unbiased for all pairs $(i,j)$.


Classification of pairs of algebraically unbiased complete systems of projectors is a notoriously difficult problem. Another difficult problem is to find the maximal set of pairwise algebraically unbiased complete systems of projectors. Both problems have the following interpretation in the theory of Lie algebras.

Consider  a simple Lie algebra $L$ over an algebraically closed field of characteristic zero.
Let $K$ be the Killing form on $L$.
The following definitions were introduced by J.G.Thompson in 1960, in course of his study of integer quadratic lattices.

{\bf Definition.} Two Cartan subalgebras $H_1$ and $H_2$
in $L$ are said to be {\it orthogonal} if $K(h_1,h_2) = 0$ for all $h_1 \in H_1,
h_2 \in H_2$.

{\bf Definition.} Decomposition of $L$ into the direct sum
of Cartan subalgebras $L = \oplus^{h+1}_{i=1}
H_i$ is said to be {\it orthogonal} if
$H_i$ is orthogonal to $H_j$, for all $i \ne j$.

Intensive study of orthogonal decompositions has been undertaken since then (see the book \cite{KT} and references therein).
For Lie algebra $sl(n)$, A.I. Kostrikin et all arrived to the following conjecture, called {\it Winnie-the-Pooh Conjecture} (cf. {\em ibid.} where, in particular, the name of the conjecture is explained by a wordplay in the
Milne's book in Russian translation).

\begin{Conjecture}
Lie algebra $sl(n)$ has an orthogonal decomposition if and only if $n = p^m$, for a prime number $p$.
\end{Conjecture}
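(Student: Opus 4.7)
The Winnie-the-Pooh conjecture has resisted attack since 1960 and remains open even for $n=6$, so what follows is a strategic plan built around the symplectic and potential framework of this paper, not a sketch of an actual proof. The plan is to recast the conjecture as a statement about real critical points of a sum of potentials of the type $E=\det(g)/\prod(g_{ij})^{1/n}$ associated with configurations of $n+1$ pairwise unbiased frames, and then to look for an obstruction to the existence of such critical points when $n$ is not a prime power.

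The ``if'' direction is classical: for $n=p^m$ an orthogonal decomposition of $sl(n)$ is built from the finite Heisenberg group over $\FF_{p^m}$, whose $n+1$ maximal isotropic subgroups in the associated symplectic $\FF_p$-plane yield $n+1$ mutually unbiased eigenbases of $\CC^n$. In the language developed in the paper, these configurations are distinguished real critical points of the relevant potential, with a large symmetry group containing $SL_2(\FF_{p^m})$.

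For the ``only if'' direction I would proceed in three stages. First, generalize the Lagrangian intersection picture of the introduction from a pair of frames to an $(n+1)$-tuple: for each unordered pair $(i,j)$ there is a Lagrangian pair $X_{ij}\subset Y_{ij}$ whose intersection is cut out by $d\log E_{ij}=0$, and an orthogonal decomposition is a simultaneous solution of the coupled system $\{d\log E_{ij}=0\}_{i<j}$ after passing to the appropriate diagonal. Second, show that this coupled system defines a zero-dimensional subscheme over $\QQ$ once the standard diagonal Cartan is fixed, and study the splitting of its residue fields; the existence of an orthogonal decomposition is the existence of a real (unitary) point of this scheme and should impose $p$-adic constraints on $n$. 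Third, invoke the mirror-symmetric interpretation sketched in the introduction: the $E_{ij}$ are Laurent polynomials with Newton polytope the Birkhoff polytope, so by mirror symmetry their critical counts are governed by the quantum cohomology of a suitable resolution of the Gorenstein Fano variety $X_{\mathrm{Birk}}$, and the hope is that the existence of a real critical point of the coupled system is controlled by divisibility properties of arithmetic invariants of this mirror Landau--Ginzburg model which are compatible with unitarity only when $n=p^m$.

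The principal obstacle, and the reason this stays a programme rather than a theorem, is the third stage. For $n\geq 4$ the Birkhoff polytope is combinatorially intractable, no crepant resolution of $X_{\mathrm{Birk}}$ is known, and the required arithmetic input on the mirror side is unavailable. A realistic intermediate goal is $n=6$: here $X_{\mathrm{Birk}}$ is still large but in principle computable, and one could attempt to exclude real critical points of $\sum_{i<j}\log E_{ij}$ on the real form of the product of flag varieties by Morse-theoretic or equivariant-cohomological arguments, thereby settling the smallest open instance of the conjecture and providing a template for the general arithmetic step.
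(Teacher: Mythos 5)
The statement you are addressing is labelled a \emph{Conjecture} in the paper, and the paper offers no proof: it explicitly records that even the non-existence of an orthogonal decomposition for $sl(6)$ --- the first case where $n$ is not a prime power --- remains open. So there is no argument of the authors to compare yours against, and your submission, which you candidly describe as a programme rather than a proof, cannot be accepted as a proof of the statement. Your ``if'' direction is correct and classical (the Heisenberg group over $\FF_{p^m}$ does produce $n+1$ pairwise unbiased eigenbases, hence an orthogonal decomposition for $n=p^m$), but that direction is not where the difficulty lies.

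The genuine gap is the entire ``only if'' direction, and the specific mechanism you propose is already undermined by data in the paper itself. Your stage two asks that the coupled critical system define a zero-dimensional scheme over $\QQ$ whose residue fields carry the arithmetic obstruction; but the paper reports that the critical locus of a single potential $E$ is a curve for $n=4$, a $4$-dimensional family for $n=6$, and --- crucially --- a $1$-parameter family for the prime $n=7$. The dimension and arithmetic complexity of these critical loci therefore do not cleanly track whether $n$ is a prime power, so there is no evidence that the coupled system for an $(n+1)$-tuple behaves as your programme requires, and no argument is offered that it does. Stage three (mirror-symmetric control of \emph{real} critical points via quantum cohomology of a resolution of the Birkhoff toric variety) is, as you acknowledge, entirely unavailable: mirror-theoretic critical-point counts, even where established, count critical points over $\CC$ with multiplicity and say nothing about existence of real or unitary points, which is the actual content of the conjecture. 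As it stands the proposal identifies a plausible geometric reformulation consistent with the paper's framework, but supplies no step that would constitute progress on the open direction.
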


The non-existence of an orthogonal decomposition for $sl(6)$, when $n=6$ is the first number which is not a prime power is still open. It is also important to find the maximal number of pairwise orthogonal Cartan subalgebras in $sl(n)$ for any given $n$ as well as to classify them up to obvious symmetries.

Recall the interpretation of the problem in terms of algebraically unbiased systems of rank 1 projectors. This was discovered by the first author about 25 years ago (cf. \cite{KT}).

Let $sl(V)$ be the Lie algebra of traceless operators in $V$. Killing form is given by the trace of product of operators.
A Cartan subalgebra $H$ in $V$ defines a unique complete system of minimal orthogonal projectors in $V$. Indeed, $H$ can be extended to the Cartan subalgebra $H'$ in $gl(V)$ spanned by $H$ and the identity operator $E$.
Rank 1 projectors in $H'$
are pairwise orthogonal and comprise the required system.

If $p$ is a minimal projector in $H'$, then trace of $p$ is 1, hence, $p-\frac 1nE$ is in $H$. If projectors $p$ and $q$ are associated to orthogonal Cartan subalgebras, then
$$
{\rm Tr}(p-\frac 1nE)(q-\frac 1nE)=0,
$$
which is equivalent to $p$ and $q$ to be algebraically unbiased.

Therefore, an orthogonal pair of Cartan subalgebras is in one-to-one correspondence with a pair of algebraically unbiased complete systems of orthogonal projectors. Similarly, orthogonal decompositions of $sl(n)$ correspond to a set of $n+1$ of pairwise algebraically unbiased complete systems of orthogonal projectors. In particular, it is clear from this correspondence that the number of pairwise algebraically unbiased complete systems never exceeds $n+1$.

\subsection{Graphs and algebra $B_{\Lambda}(\Gamma )$}

In the analysis of the problem, it makes sense to consider not only complete systems of orthogonal projectors, but also study mutual unbiasedness for other configurations of projectors. Thus, we come to the problem of studying the sets of projectors where every pair satisfies either conditions (\ref{aunbias1}-\ref{aunbias2}) or (\ref{orthog}). This leads us to assign an initial graph data to the problem and to the representation theory of suitable algebras related to this data.

Assume we are given a simply laced graph $\Gamma$ with a finite number of vertices and no loop (i.e. no edge with coinciding ends). Consider a finite dimensional vector space $V$. Assign a rank 1 projector in $V$ to every vertex of the graph. If two vertices are related by
an edge, then we require the corresponding two projectors to be algebraically unbiased. If there is no edge between the vertices, then we require the projectors to be orthogonal. The problem is to classify, for a given graph, all possible systems of projectors satisfying the required conditions modulo automorphisms of $V$.

It makes sense to consider the problem in a more general context.
Let us assign to every edge $(ij)$ of the graph a nonzero weight $\lambda_{ij}$ of the base field. We put $\lambda_{ij}=\lambda_{ji}$. The set of all weights is denoted by ${\bf \Lambda}$.

Define
algebra $B_{\bf \Lambda}(\Gamma )$ as a unital algebra over the base field with generators $x_i$, numbered by
vertices $i$ of $\Gamma$, which subject to relations:
\begin{itemize}
\item{$x^2_i = x_i$, for every vertex $i$,}
\item{$x_ix_jx_i = {\lambda}_{ij}x_i$, $x_jx_ix_j = {\lambda}_{ij}x_j$, if $i$ and $j$ are adjacent
in $\Gamma$,}
\item{$x_ix_j = x_jx_i = 0$, if there is no edge connecting $(ij)$ in $\Gamma$.}
\end{itemize}

Various properties of this algebra, as well as relation of it with Hecke algebras of graphs and Poincare groupoids of graphs, are described in \cite{BZh}.

We are interested in finite dimensional representations of $B_{\bf \Lambda}(\Gamma )$.
If graph is connected, then all generators $x_i$ are presented by projectors of the same rank. We will call this rank by the {\em rank of the representation}. The category of representation of $B_{\bf \Lambda}(\Gamma )$ can be understood (see \cite{BZh}) in terms of gluing of t-structures \cite{BBD}.

Consider graph $\Gamma$ the full bipartite graph $\Gamma_{n,n}$ with $n$ vertices in both rows. If we put all weights ${\lambda}_{ij}=\frac 1n$, then the classification problem for rank 1 $n$-dimensional representations of $B_{\bf \Lambda}(\Gamma_{nn} )$ is equivalent to the problem of classifying pairs of algebraically unbiased complete systems of orthogonal projectors. If $\Gamma$ is the graph with $n+1$ rows and $n$ vertices in every row, such that all vertices in different rows are connected by an edge and vertices in the same row have no connecting edge, and all weights are again ${\lambda}_{ij}=\frac 1n$, then $n$-dimensional representations of algebra $B_{\bf \Lambda}(\Gamma )$ correspond to the maximal system of pairwise orthogonal projectors, i.e. orthogonal decomposition of the Lie algebra $sl(n)$.

\subsection{Mutually unbiased bases and configurations of lines}

Now assume that the vector space is over ${\mathbb C}$ and it is endowed with a hermitian metric. Let all projectors be Hermitian, i.e. $p=p^\dag$.
A version of the problem on classification of pairwise unbiased systems of projectors 
attracted a lot of attention from physicists. 
If orthonormal bases compatible with an algebraically unbiased pair of complete system of orthogonal Hermitian projectors is chosen, then we got what is called in physics literature mutually unbiased bases.

Let $V$ be an $n$ dimensional complex space with a fixed
Hermitian metric $\langle \ ,\ \rangle $. Two orthonormal Hermitian
bases $\{e_i\}$ and $\{f_j\}$ in $V$ are said to be {\em mutually
unbiased} if, for all $(i,j)$,
\begin{equation}\label{ef}
|\langle e_i, f_j \rangle|^2 =\frac 1n
\end{equation}

They have numerous applications in the quantum information theory and related subjects (cf. \cite{BZh}).

The systems of orthogonal projectors related to mutually unbiased bases are algebraically unbiased systems of Hermitian projectors. The moduli spaces of algebraically unbiased systems of orthogonal projectors is a complexification of the moduli of mutually unbiased bases, in the sense that the latter moduli is an open subset in the stable locus of the anti-holomorphic involution on the former space, which is defined by action on the projectors by $p\mapsto p^\dag$.

In the same spirit, the moduli of rank 1 representations of the algebra $B_{\bf \Lambda}(\Gamma )$ can be interpreted as a complexified moduli of solutions for the following problem. Assign a complex line in a Hermitian space to every vertex of the graph $\Gamma$. Put the condition that two lines are Hermitian orthogonal if there is no edge linking the corresponding vertex of the graph. Put condition on  the angle $\alpha_{ij}$ between lines corresponding to vertices $i$ and $j$:
$$
{\rm cos}\ \alpha_{ij}=\lambda_{ij},
$$
where $\lambda_{ij}$ is the weight of the edge $(ij)$ in the graph. Clearly, we should assume that $\lambda_{ij}$ are real numbers in this case.

This problem for a relatively simple graph of one cycle of order 5 is the subject of intricate study in classical papers by J.Napier \cite{N} and C.F.Gauss \cite{G} (see also \cite{Sch}).

\section{Algebraically unbiased projectors and symplectic geometry}

Here we undertake the study of the moduli of representations for algebra $B_{\bf \Lambda}(\Gamma )$ for bipartite graphs $\Gamma$ in the framework of symplectic geometry.

\subsection{Bipartite graphs and an integrable system}
Fix positive integers $k$ and $n$ with $k\le n$.
Let $\Gamma =\Gamma_{kn}$ be the full bipartite graph with $k$ vertices in one row and $n$ vertices in the other. Here we discuss the moduli space, ${\cal M}_B$, of $B_{\bf \Lambda}(\Gamma_{kn} )$ representations of dimension $n$ and rank 1. By the moduli spaces of representations we will always mean the GIT quotient of the affine variety of representations in a space with given basis by the action of ${\bf GL}(V)$.

Denote by $p_i$ projectors from the first row, and by $q_j$ projectors from the second row. Our initial data include the matrix $\Lambda$ of coefficients $\lambda_{ij}$ the weights of the edge $(ij)$, where $i$ is the index of a vertex from the first row and $j$ from the second one. If we are interested in $n$-dimensional representations, then clearly we should put conditions:
\begin{equation}\label{sumi}
\sum_i\lambda_{ij}=1
\end{equation}
for every $j$. If $k=n$, then we should put also conditions:
\begin{equation}\label{sumj}
\sum_j\lambda_{ij}=1
\end{equation}
for every $i$.

We shall assume from now on that the base field is ${\mathbb C}$.
There is an integrable system related to the moduli space ${\cal M}_B$.
Consider a symplectic variety $Y$, the product of $k$ copies of the coadjoint orbit of projectors of rank 1, which we denote by ${\cal O}_{p_i}$:
$$
Y={\cal O}_{p_1}\times \dots \times {\cal O}_{p_k}.
$$

Condition $p_iq_jp_i=\lambda_{ij}p_i$ can be read as:
\begin{equation}\label{trpq}
{\rm Tr} p_iq_j=\lambda_{ij}.
\end{equation}
If we assume the projectors $q_j$ to be fixed and projectors $p_i$ to be varied in the $i$-th orbit ${\cal O}_{p_i}$, then functions ${\rm Tr} p_iq_j$ commute with respect to the Poisson structure on $Y$, in this sense, they define an integrable system. Fix a basis $\{e_i\}$ compatible with projectors $q_i$ and denote by $T=({\mathbb C}^*)^n$ the Cartan torus of diagonal matrices. The map into ${\mathbb C}^{kn}$ defined by ${\rm Tr} p_iq_j$ is actually the moment map for the action of $T^k$ on $Y$ by independent conjugations of projectors $p_i$. It takes values in the subspace ${\mathbb C}^{k(n-1)}$ in view of (\ref{sumi}), for $k<n$, and in the subspace ${\mathbb C}^{(n-1)^2}$, for $k=n$, in view of (\ref{sumj}). Thus, the Lagrangian fibers of the integrable system are algebraic tori, the orbits of the torus $T^k$.

Consider the product of $Y$ with the coadjoint orbit ${\cal O}_{-P}$ of negated projectors $-P$ (where $P^2=P$) of rank $k$. Let $X$ be the pre-image of {\bf 0} under the moment map
$$
\mu :Y\times {\cal O}_{-P}\to \g^*
$$
for the diagonal action of $G={\bf GL}(n)$ on this product, where $\g =\gl (n)$. Note that $\mu (p_1,\dots , p_k, -P)=\sum p_i-P$.

We leave the proof of the following lemma for the reader.

\begin{Lemma}
The sum of $k$ rank 1 projectors is a projector of rank $k$ if and only if the rank 1 projectors are pairwise orthogonal.
\end{Lemma}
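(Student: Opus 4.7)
One direction is a direct computation which I would dispose of first. Assume $p_ip_j=0$ for all $i\ne j$. Then expanding $P^2 = \bigl(\sum_i p_i\bigr)^2 = \sum_i p_i^2 = P$ shows $P$ is a projector, and ${\rm rk}\, P = {\rm Tr}\, P = \sum_i {\rm Tr}\, p_i = k$.

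For the harder converse, assume $P = \sum_{i=1}^k p_i$ is a projector of rank $k$. My plan is first to extract from the rank condition a description of ${\rm Im}\, P$ and ${\rm Ker}\, P$ in terms of the $p_i$, and then to deduce pairwise orthogonality by a direct-sum argument. Since $P=\sum_i p_i$ one has ${\rm Im}\, P \subseteq \sum_i {\rm Im}\, p_i$; the right-hand side has dimension at most $k$, while $\dim {\rm Im}\, P = k$, so both coincide and the $k$ lines ${\rm Im}\, p_i$ are in direct sum: ${\rm Im}\, P = \bigoplus_i {\rm Im}\, p_i$. Dually, if $Pv=0$ then $\sum_i p_iv=0$ is a relation in this direct sum, so each $p_iv=0$, and hence ${\rm Ker}\, P = \bigcap_i {\rm Ker}\, p_i$.

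I would then use these to establish $Pp_i=p_i$ and $p_iP=p_i$ for each $i$: the former because ${\rm Im}\, p_i \subseteq {\rm Im}\, P$ and $P$ restricts to the identity on its image, the latter because $I-P$ maps $V$ into ${\rm Ker}\, P \subseteq {\rm Ker}\, p_i$. Expanding $Pp_i=p_i$ yields $\sum_{j\ne i}p_jp_i=0$; evaluating on an arbitrary vector $v$, the summands $p_jp_iv$ live in distinct summands of the direct sum $\bigoplus_{j\ne i}{\rm Im}\, p_j$, so each vanishes. Hence $p_jp_i=0$ for all $j\ne i$, and exchanging the roles of $i$ and $j$ (or equivalently using $p_iP=p_i$) gives $p_ip_j=0$ as well, which is the orthogonality relation (\ref{orthog}).

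The argument is elementary linear algebra, and there is no real obstacle. The only point where actual content enters---and what does the real work---is the dimension count forcing $\sum_i {\rm Im}\, p_i$ to be a direct sum; that directness is precisely what lets the single aggregated relation $\sum_{j\ne i}p_jp_i=0$ be split into the individual vanishing statements $p_jp_i=0$.
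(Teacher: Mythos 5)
Your proof is correct and complete; the paper explicitly leaves this lemma to the reader, so there is no proof to compare against. Both directions check out: the dimension count forcing $\sum_i {\rm Im}\,p_i$ to be a direct sum is indeed the key point, and splitting $\sum_{j\ne i}p_jp_i=0$ across that direct sum cleanly yields the pairwise orthogonality.
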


This implies that $X$ is identified with the moduli space of systems of $k$ orthogonal projectors in a space with fixed basis.

\begin{Lemma}
$X$ is Lagrangian in $Y\times {\cal O}_{-P}$.
\end{Lemma}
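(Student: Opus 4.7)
The strategy is to combine a parameter count for $X$ with the standard moment-map identity
$$\ker(d\mu)_x \;=\; \bigl(T_x(G\cdot x)\bigr)^{\omega}.$$
The previous lemma lets me unpack $X$ concretely: a point $x \in X = \mu^{-1}(\mathbf{0})$ is a tuple $(p_1,\dots,p_k,-P)$ with $\sum_i p_i = P$, and the lemma forces the $p_i$ to be pairwise orthogonal and $P$ to be determined by them. Such data are equivalent to a direct sum decomposition $V = L_1 \oplus \dots \oplus L_k \oplus K$ with $L_i = \mathrm{Im}\,p_i$ and $K = \ker P = \bigcap_i \ker p_i$; the individual $\ker p_i$'s are then forced. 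Parameter counting gives $\dim X = k(n-1) + k(n-k) = k(2n-k-1)$. On the other hand $\dim\mathcal{O}_{p_i} = 2(n-1)$ and $\dim\mathcal{O}_{-P} = 2k(n-k)$, so $\dim(Y\times\mathcal{O}_{-P}) = 2k(2n-k-1) = 2\dim X$, the correct ambient dimension for a Lagrangian.

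Since $\mathbf{0}\in\g^*$ is $G$-fixed, equivariance of $\mu$ yields $G\cdot x \subseteq X$, and the moment-map identity reads
$$T_x(G\cdot x) \;\subseteq\; T_xX \;=\; \ker(d\mu)_x \;=\; \bigl(T_x(G\cdot x)\bigr)^{\omega}.$$
In particular $T_xX$ is automatically coisotropic, and is Lagrangian precisely when $T_x(G\cdot x) = T_xX$, i.e.\ when the $G$-orbit is open in $X$.

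The remaining input is the stabilizer calculation. An invertible operator fixing $x$ must commute with every $p_i$, hence preserve each line $L_i$ and their common null space $K$. Thus the $G$-stabilizer of $x$ is isomorphic to $(\mathbb{C}^*)^k \times GL(n-k)$, of dimension $k + (n-k)^2$, and
$$\dim(G\cdot x) \;=\; n^2 - k - (n-k)^2 \;=\; k(2n-k-1) \;=\; \dim X.$$
Combined with the inclusion above this forces $T_x(G\cdot x) = T_xX$, whence $T_xX = (T_xX)^{\omega}$ and $X$ is Lagrangian. The one non-cosmetic step in the plan is this matching of orbit dimension with $\dim X$; the rest is standard symplectic reduction.
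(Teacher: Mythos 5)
Your proof is correct and follows essentially the same route as the paper: both arguments exploit that $X=\mu^{-1}(\mathbf{0})$ is a single $G$-orbit and use the standard orthogonality $\ker(d\mu)_x=\bigl(T_x(G\cdot x)\bigr)^{\omega}$ (which the paper phrases as $\mu$ and the quotient map $\pi$ forming a Poisson/dual pair) to get isotropy, finishing with a half-dimension count. The only difference is that you carry out explicitly the stabilizer and dimension computation that the paper dismisses as ``easy to see.''
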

\begin{proof}
Indeed, all systems of $k$ orthogonal projectors are in one orbit of ${\bf GL}(n)$, hence they are in the fibre of the projection map $\pi : Y\times {\cal O}_{-P}\to Y\times {\cal O}_{-P}/{\bf GL}(n)$. Since the arrows $\mu$ and $\pi$ comprise a Poisson pair of maps (i.e. pull-backs of algebras of functions along $\mu$ and $\pi$ are mutual annihilators), the symplectic form on $Y$ restricts by zero to the intersection of $\mu$-fiber with $\pi$-fiber. It implies that $X$ is isotropic and it is easy to see that it is of half dimension of $Y$.
\end{proof}
Let ${\rm pr}_1:Y\times {\cal O}_{-P}\to Y$ be the projection along the first coordinate.
Denote by $F_{\lambda_{ij}}\subset Y$ the fiber of the above integrable system over the point $(\lambda_{ij})$.
\begin{Proposition}\label{mb}
Moduli space ${\cal M}_B$ is identified with  ${\rm pr}_1^{-1}F_{\lambda_{ij}}\cap X$ modulo the action of one copy of $T$ that conjugates all $p_i$'s simultaneously.
\end{Proposition}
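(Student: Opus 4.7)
The plan is to use the $\mathrm{GL}(n)$-action in the definition of ${\cal M}_B$ to normalize the $q_j$'s to the standard coordinate projectors; the remaining data will then read off as a point of $\mathrm{pr}_1^{-1}F_{\lambda_{ij}} \cap X$ with residual symmetry exactly the Cartan torus $T$.

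First I would unpack the representation-theoretic data. A dimension-$n$ rank-$1$ representation of $B_{\bf \Lambda}(\Gamma_{kn})$ is a tuple $(p_1,\dots,p_k,q_1,\dots,q_n)$ of rank-$1$ projectors with $p_ip_{i'}=0$ and $q_jq_{j'}=0$ for distinct indices, and satisfying $p_iq_jp_i=\lambda_{ij}p_i$, which is equivalent to $\mathrm{Tr}(p_iq_j)=\lambda_{ij}$. The $n$ pairwise orthogonal rank-$1$ projectors $q_j$ sum to the identity and thus form a complete orthogonal system; any two such systems are conjugate under $\mathrm{GL}(n)$. Using this freedom, I would normalize $(q_1,\dots,q_n)$ to the coordinate projectors of the fixed basis $\{e_j\}$, after which the stabilizer of the ordered configuration of $q_j$'s in $\mathrm{GL}(n)$ is exactly the diagonal torus $T$.

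Next I would read off the remaining data on the $p_i$'s. The tuple $(p_1,\dots,p_k)$ lies in $Y$ by the rank assumptions. Pairwise orthogonality of the $p_i$'s is, by the preceding lemma, equivalent to $P := \sum_i p_i$ being a rank-$k$ projector, i.e.\ to $(p_1,\dots,p_k,-P) \in X = \mu^{-1}({\bf 0})$. The trace conditions $\mathrm{Tr}(p_iq_j)=\lambda_{ij}$ relative to the normalized $q_j$'s are by construction the defining equations of the integrable-system fiber $F_{\lambda_{ij}} \subset Y$, so $(p_1,\dots,p_k,-P) \in \mathrm{pr}_1^{-1}F_{\lambda_{ij}}$. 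Conversely, any point of $\mathrm{pr}_1^{-1}F_{\lambda_{ij}} \cap X$ combined with the standard $q_j$'s constitutes a representation of $B_{\bf \Lambda}(\Gamma_{kn})$.

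Finally I would compare the quotients. The $\mathrm{GL}(n)$-action on the representation variety preserves the slice fixing the $q_j$'s only through the stabilizer $T$, which acts on the slice by simultaneous conjugation of the $p_i$'s (and trivially on the $q_j$'s). The principal point to verify carefully is that this setwise identification descends to an isomorphism of GIT quotients; this follows from the fact that complete orthogonal systems of rank-$1$ projectors form a single closed $\mathrm{GL}(n)$-orbit, so that normalization of the $q_j$'s defines a good slice transverse to this orbit, and the induced map on GIT quotients is an isomorphism. I expect this last point — the GIT bookkeeping rather than the pointwise identification — to be the only place where genuine care is required.
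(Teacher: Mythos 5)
Your argument is correct and is exactly the identification the paper intends: Proposition \ref{mb} is stated there without proof, but the surrounding text (fixing a basis compatible with the $q_j$'s with residual symmetry the diagonal torus $T$, identifying $X$ with systems of $k$ orthogonal projectors via the preceding lemma, and cutting out the fiber $F_{\lambda_{ij}}$ by the trace conditions) sets up precisely your normalize-the-$q_j$'s-and-take-the-residual-$T$-quotient argument. The GIT point you single out is handled correctly by the closedness of the orbit of a complete orthogonal system (isomorphic to the affine variety $G/T$), so the slice argument does descend to the quotients.
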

\subsection{Algebra $A_{n,{\bar \Lambda }}$ and Lagrangian image of ${\cal M}_B$}

We will also consider the algebra $A_{n,{\bar \Lambda }}$, where ${\bar \Lambda}=(\lambda_1,\dots , \lambda_n)$ is a sequence of elements in ${\mathbb C}^*$, with generators $q_i$, for $i=1,\dots , n$, and $P$, with relations:
$$
q_i^2=q_i,\ \ P^2=P,\ \ q_iq_j=0,\ {\rm for}\ i\ne j,\
q_iPq_i=\lambda_iq_i.
$$

An $A_{n,{\bar \Lambda }}$-representation is said to be {\em admissible} if it is of dimension $n$ and $q_i$'s are represented by projectors of rank 1 and $P$ by a projector of rank $k$. Let ${\cal M}_A$ be the moduli of admissible $A_{n,{\bar \Lambda }}$-representations. One can see that it is an irreducible variety.

The variety ${\cal M}_A$ has a natural symplectic form (on an open subvariety of smooth points). To see this, choose a basis in the representation vector space $V$ which is compatible with $q_i$'s. This choice is up to the action of the diagonal torus $T$. Projector $P$ is an element of the coadjoint orbit ${\cal O}_P$ of ${\bf GL}(n)$. Conditions $q_iPq_i=\lambda_iq_i$ determine the fiber over ${\bar \Lambda}$ of the moment map $\mu_A: {\cal O}_P\to \ttt^*$ for the action of $T$ (= fix diagonal coefficients of the matrix $P$ in the chosen basis), where $\ttt$ is the Cartan subalgebra. This means that the moduli space ${\cal M}_A$ is the symplectic reduction of the coadjoint orbit. Hence it is endowed with a symplectic form.

Let $\lambda_j=\sum_i \lambda_{ij}$.  Then there is a homomorphism $\psi: A_{n,{\bar \Lambda }}\to B_{\bf \Lambda}(\Gamma_{kn} )$ that takes $q_i\mapsto q_i$ and $P\mapsto \sum_{i=1}^kp_i$, which is in fact monomorphic.
It allows us to consider the pull-back morphism $\Psi: {\cal M}_B\to {\cal M}_A$.

\begin{Theorem}
The pull-back of the symplectic structure on ${\cal M}_A$ along $\Psi$ to ${\cal M}_B$ is zero.
\end{Theorem}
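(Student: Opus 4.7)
The plan is to reduce the theorem to the geometric setup already established in this section: by Proposition \ref{mb}, ${\cal M}_B$ is realized as ${\rm pr}_1^{-1}(F_{\lambda_{ij}})\cap X$ modulo the diagonal torus $T$ that conjugates all $p_i$ simultaneously, while ${\cal M}_A$ is the symplectic reduction of the coadjoint orbit ${\cal O}_P$ by the torus $T$ at level ${\bar \Lambda}$ of $\mu_A$. I would first make $\Psi$ explicit as a geometric map between these two models. After fixing a basis compatible with the $q_j$'s, the symplectic form $\omega_{{\cal M}_A}$ is characterized by the property that its pullback to $\mu_A^{-1}({\bar \Lambda})\subset {\cal O}_P$ equals $\omega_{{\cal O}_P}|_{\mu_A^{-1}({\bar \Lambda})}$. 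The lift of $\Psi$ to the total space sends $(p_1,\dots,p_k,q_j)\mapsto (P=\sum_i p_i,q_j)$, which is the projection ${\rm pr}_2: Y\times {\cal O}_{-P}\to {\cal O}_{-P}$ followed by the negation map ${\cal O}_{-P}\to {\cal O}_P$; summing ${\rm Tr}(p_iq_j)=\lambda_{ij}$ over $i$ confirms that the image lies in $\mu_A^{-1}({\bar \Lambda})$. Since negation on coadjoint orbits pulls back $\omega_{{\cal O}_P}$ to $-\omega_{{\cal O}_{-P}}$, it suffices to prove
$$
\omega_{{\cal O}_{-P}}|_{{\rm pr}_1^{-1}(F_{\lambda_{ij}})\cap X}=0.
$$

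This vanishing is a direct consequence of two Lagrangian conditions already in hand. The ambient symplectic form on $Y\times {\cal O}_{-P}$ is $\omega_Y+\omega_{{\cal O}_{-P}}$. The preceding lemma states that $X$ is Lagrangian, so this sum restricts to zero on $X$. At the same time, ${\rm pr}_1^{-1}(F_{\lambda_{ij}})=F_{\lambda_{ij}}\times {\cal O}_{-P}$, and $F_{\lambda_{ij}}$ is a Lagrangian fibre of the integrable system on $Y$ constructed above, so $\omega_Y$ restricts to zero on this preimage. Combining these two relations on the intersection ${\rm pr}_1^{-1}(F_{\lambda_{ij}})\cap X$ yields $\omega_{{\cal O}_{-P}}=0$ there, which is what we needed.

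The main point that demands care is the bookkeeping around the two torus quotients. The torus $T$ acting on ${\cal M}_B$ by simultaneous conjugation of all $p_i$ acts on $P=\sum_i p_i$ by the same conjugation, which is precisely the residual $T$-action on $\mu_A^{-1}({\bar \Lambda})$ whose quotient is ${\cal M}_A$; hence the lift of $\Psi$ is $T$-equivariant, and the vanishing of the pullback of $\omega_{{\cal O}_P}|_{\mu_A^{-1}({\bar \Lambda})}$ to ${\rm pr}_1^{-1}(F_{\lambda_{ij}})\cap X$ descends to the quotients as the vanishing of $\Psi^*\omega_{{\cal M}_A}$. Aside from this equivariance check and the minor sign coming from the negation of coadjoint orbits, no further computation is needed: the theorem is essentially a combination of the two Lagrangian statements already proved.
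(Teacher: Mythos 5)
Your proposal is correct and follows essentially the same route as the paper: identify $\Psi$ (via Proposition \ref{mb}) with the restriction of ${\rm pr}_2$ to ${\rm pr}_1^{-1}(F_{\lambda_{ij}})\cap X$ followed by the torus quotient, pull the reduced form on ${\cal M}_A$ back to $\omega_{{\cal O}_{-P}}$ on the moment-map fibre, and then kill it by combining the two Lagrangian statements (for $X$ in $Y\times{\cal O}_{-P}$ and for $F_{\lambda_{ij}}$ in $Y$). Your added care about the sign from the negation ${\cal O}_{-P}\to{\cal O}_P$ and the $T$-equivariance of the lift only makes explicit what the paper leaves implicit.
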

\begin{proof} According to proposition \ref{mb}, ${\cal M}_B$ is identified with the quotient of ${\rm pr}_1^{-1}F_{\lambda_{ij}}\cap X$ by the action of $T$. The map $\Psi :{\cal M}_B\to {\cal M}_A$ can be understood as the composite of the restriction to this intersection of the second projection ${\rm pr}_2:Y\times {\cal O}_{-P}\to {\cal O}_{-P}$, which takes it to a fibre $\mu^{-1}_A({\bar \Lambda})$ of the moment map $\mu_A$, with the map on the fiber, the factorization by the action of $T$.

Since the symplectic form on ${\cal M}_A$ is obtained by symplectic reduction, it is pulled back to the fibre of the $\mu_A$ to a restriction of the standard symplectic form $\omega_{-P}$ on the orbit of $-P$ (up to sign). Since $X$ is Lagrangian in $Y\times {\cal O}_{-P}$ the restriction of the form ${\rm pr}_2^*(\omega_{-P})$ to it coincides with the restriction of the form ${\rm pr}_1^*(\omega_Y)$, where $\omega_Y$ is the symplectic form on $Y$, the product of coadjoint orbits. The letter form is zero on ${\rm pr}_1^{-1}F_{\lambda_{ij}}\cap X$, because $F_{\lambda_{ij}}$ is Lagrangian in $Y$. Hence the result.
\end{proof}
In the terminology of noncommutative differential geometry, there is a noncommutative symplectic form on $A_{n,{\bar \Lambda }}$ which maps to the trivial noncommutative 2-form on $B_{\bf \Lambda}(\Gamma_{kn} )$. The noncommutative symplectic form induces ordinary symplectic forms on the moduli spaces of representations, hence the result of the theorem.
The theorem implies that the image of the map $\Psi : {\cal M}_B\to {\cal M}_A$ is isotropic with respect to the symplectic form.

\begin{Conjecture}
The image of $\Psi$ is Lagrangian in ${\cal M}_A$ for every irreducible component of ${\cal M}_B$. If $k\le \frac n2$, then the map is quasifinite on its image.
\end{Conjecture}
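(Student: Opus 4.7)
Since the preceding theorem gives $\Psi^{\ast}\omega_{{\cal M}_A}=0$, the image of every irreducible component $C\subset{\cal M}_B$ is already isotropic, so $\dim\Psi(C)\le\tfrac{1}{2}\dim{\cal M}_A$. The Lagrangian claim therefore amounts to an equality of dimensions, and my plan is to attack it through a dimension count combined with a fibre analysis of $\Psi$.

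The starting point is the dimension comparison. The moduli ${\cal M}_A$ is the symplectic reduction of the coadjoint orbit ${\cal O}_P$ (of complex dimension $2k(n-k)$) by the Cartan torus $T$ acting with trivial scalar subgroup; since $\sum_j\lambda_j=k$ is automatic, the moment map hits an affine subspace of $\mathfrak{t}^{\ast}$ of codimension one, giving
$$
\dim{\cal M}_A = 2k(n-k) - 2(n-1).
$$
For ${\cal M}_B$, Proposition \ref{mb} identifies it with $({\rm pr}_1^{-1}F_{\lambda_{ij}}\cap X)/T$. The Lagrangian fibre $F_{\lambda_{ij}}$ has dimension $k(n-1)$, and by the preceding Lemma, $X$ is cut out in $Y\times{\cal O}_{-P}$ by the pairwise orthogonality relations, which for rank one projectors $p_i=u_iv_i^{T}/(v_i^{T}u_i)$ become the $k(k-1)$ scalar equations $v_i^{T}u_j=0$ for $i\neq j$. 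The expected dimension of $F_{\lambda_{ij}}\cap X$ projected to $Y$ is thus $k(n-k)$, so on every component of expected dimension one has $\dim{\cal M}_B=k(n-k)-(n-1)=\tfrac{1}{2}\dim{\cal M}_A$, the required Lagrangian dimension.

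Given this matching of dimensions, the Lagrangian claim reduces to showing that $\Psi$ is generically finite on each irreducible component, since then the isotropy bound is saturated. To analyse a fibre over $(q_j,P)\in{\cal M}_A$, I would write $p_i=u_iv_i^{T}$ with $v_i^{T}u_j=\delta_{ij}$ and $u_i\in{\rm Im}\,P$: the $u_i$'s form an ordered direct sum decomposition of ${\rm Im}\,P$ into $k$ lines, contributing $k(k-1)$ parameters, while the constraints $(u_i)_j(v_i)_j=\lambda_{ij}$, after subtracting the row- and column-sum dependencies forced by ${\rm Tr}\,p_i=1$ and ${\rm Tr}(Pq_j)=\lambda_j$, reduce to $(k-1)(n-1)$ independent scalar conditions. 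The naive count $k(k-1)-(k-1)(n-1)=(k-1)(k-n)\le 0$ for $k\le n$ already suggests finite fibres.

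The main obstacle will be to make this fibre count genuinely rigorous and to isolate the role of the bound $k\le n/2$: the naive codimension count suggests finite fibres in the wider range $k\le n-1$, so the sharper quasifiniteness statement must come from a transversality which degenerates once $k$ exceeds $n/2$. I expect this can be established by an explicit Jacobian computation on the parameter space of decompositions of ${\rm Im}\,P$, exploiting the biframe duality $(u_i,v_i)\leftrightarrow(v_i,u_i)$ that is genuine exactly when $k\le n-k$. For the Lagrangian conclusion in the remaining range $k>n/2$, where $\Psi$ may acquire positive-dimensional fibres, the plan is instead to prove directly that $X\cap{\rm pr}_1^{-1}F_{\lambda_{ij}}$ is cut out as a complete intersection of the correct codimension $k(k-1)$ on each irreducible component, ensuring that the image of each component attains the maximal isotropic dimension by the bound already provided by the theorem.
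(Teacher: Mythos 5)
This statement is labelled a \emph{Conjecture} in the paper and is left unproven there: the authors only record the consequence of the preceding theorem (isotropy of the image), note that $\tfrac12\dim{\cal M}_A$ is a \emph{lower} bound for the dimension of each irreducible component of ${\cal M}_B$, and exhibit examples ($n=6$ with $k=4,5,6$) where components exceed that bound. So there is no proof in the paper to compare against, and your text is, by its own admission, a plan rather than a proof: the two essential steps --- generic finiteness of $\Psi$ on each component for $k\le n/2$, and the saturation of the isotropy bound in general --- are only ``suggested'' by naive parameter counts and are never established. A codimension count of the form $k(k-1)-(k-1)(n-1)\le 0$ shows at best that the \emph{expected} fibre dimension is nonpositive; it does not rule out positive-dimensional fibres on special or excess-dimensional components, which is exactly where the difficulty of the conjecture lies.

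One concrete step of your plan is actually contradicted by data already in the paper. For $k>n/2$ you propose to show that $X\cap{\rm pr}_1^{-1}F_{\lambda_{ij}}$ is a complete intersection of the expected codimension on each irreducible component. But for $n=6$, $k=4$ the paper states that ${\cal M}_B$ has a component of dimension $4$ while the expected dimension $k(n-k)-(n-1)=3$ equals $\tfrac12\dim{\cal M}_A$; that component is therefore \emph{not} of expected dimension, the intersection is not a complete intersection there, and $\Psi$ cannot be generically finite on it. For such components the Lagrangian claim requires showing that the (necessarily positive-dimensional) generic fibre of $\Psi$ has dimension exactly equal to the excess, and neither your Jacobian computation nor the complete-intersection route addresses this. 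Your first part --- isotropy of the image from $\Psi^{\ast}\omega_{{\cal M}_A}=0$, hence $\dim\Psi(C)\le\tfrac12\dim{\cal M}_A$, and the observation that generic finiteness on a component of dimension $\ge\tfrac12\dim{\cal M}_A$ would force equality --- is correct and is essentially the reduction the authors themselves make; everything beyond that remains open.
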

It is not hard to check that the dimension of the irreducible components of ${\cal M}_B$ has the half of the dimension of ${\cal M}_A$ as its lower bound.
The conjecture implies that this bound is saturated if $k\le \frac n2$. An example when it is not so for $k>\frac n2$ is when $n=6$. If $k=4$, then ${\cal M}_B$ has two irreducible components, one of dimension $3$ and the other one of dimension $4$, while ${\cal M}_A$ is of dimension $6$. If $k=5$, or $6$,
there is a component ${\cal M}_B$ of dimension $4$, this is the main result of \cite{BZh1}.

\subsection{A pair of algebraically unbiased systems and a symplectic embedding of the cotangent bundle}
The problem of classifications of two algebraically unbiased complete systems of projectors is about constructing the moduli of $n$-dimensional representations for algebra $B_{\bf \Lambda}(\Gamma_{nn} )$ for the full bipartite graph $\Gamma_{nn}$ with two rows of the same length $n$.

For the case of the graph $\Gamma_{nn}$, we have a more nice interpretation of the moduli space as the intersection of two Lagrangian subvarieties in a symplectic variety. We will identify an open subset in the symplectic variety with the total space of the cotangent bundle to one of this Lagrangian subvariety. This leads the identification of the moduli space with critical points of a suitable potential function. The function is similar to Landau-Ginzburg potential for some Fano varieties.

In case $k=n$, the sum of $n$ orthogonal projectors is the identity matrix. This means that the orbit ${\cal O}_{-P}$ consists of one point. Thus,
$$
Y={\cal O}_{p_1}\times \dots \times {\cal O}_{p_n},
$$
and $X$ is a Lagrangian subvariety in $Y$, the pre-image of the identity matrix under moment map $\mu :Y\to \g^*$.

Since the (formal) neighborhood of a Lagrangian subvariety $L$ in a symplectic variety is symplectically isomorphic to the (formal) neighborhood of the Lagrangian variety embedded as the zero section into the cotangent bundle to $L$, it is tempting to identify a Zariski open subvariety in $Y$ with the cotangent bundle to $X$. We construct an open symplectic embedding of the total space of the cotangent bundle to $L$ into $Y$:
$$
\Omega^1X\to Y.
$$
To understand this embedding, consider a projection from a Zariski open subset in $Y$ to $X$. Fibers of this projection will correspond to fibers of the vector bundle $\Omega^1X$ embedded in $Y$. A point in $Y$ is a set of $n$ rank 1 projectors. Let $U\subset Y$ be the open subset of those $n$-tuples of projectors whose images are linearly independent 1-dimensional vector spaces. To any such a set of projectors $\{r_i\}$, one can assign the unique complete system of {\em orthogonal} projectors $\{p_i\}$ with the same images:
$$
{\rm Im}p_i={\rm Im}r_i.
$$
The fiber of the projection could be parameterized a follows. A projector with a given image, a line $l$, is uniquely defined by its kernel $H$, a hyperplane that does not contain $l$. Hence, one can parameterize projectors with the image ${\rm Im}p_i$ by points $h\in {\mathbb P}V^*$ which are in the complement to a hyperplane $<h,l>=0$. This is an affine space. The fibre over the point $\{p_i\}$ is the product of these affine spaces over all $i$. It has a distinguished point, $\{p_i\}$ itself, hence it has a structure of the vector space.

In calculations, we will use the fact that $X$ is a homogenous $G$-space for $G={\bf GL}(n)$. Fix a reference basis in $V$ and consider orthogonal projectors $\{ q_i\}$ compatible with this basis. Let $g\in G$ be the transition matrix from the original basis to a basis compatible with a varying system of orthogonal projectors $\{ p_i\}$:
$$
p_i=gq_ig^{-1}.
$$
Elements $g$ in the diagonal torus $T\subset G$ preserve the basis $\{ q_i\}$. This gives identification $X=G/ T$.

Further, we identify the cotangent bundle to $X$ as a symplectic reduction of the cotangent bundle to $G$ with respect to the action of $T$.
We identify elements of the tangent spaces to $G$ (at every point $g$) with matrices in the basis compatible with $\{ q_i\}$. Then elements of the contangent space can also be considered as matrices via Killing pairing ${\rm Tr}AB$. Thus, we consider elements of $\Omega^1G$ to be pairs $(g,A)\in G\times {\rm Mat}_{n\times n}$. The symplectic form on the cotangent bundle reads:
\begin{equation}\label{omegax}
\omega_X={\rm Tr}{\rm d}A{\rm d}g
\end{equation}
where ${\rm d}A$ and ${\rm d}g$ are understood as matrix valued 1-forms. The product is taken henceforth in the space of matrix valued skew-symmetric forms.

Action of the diagonal element $t\in T$ on the cotangent bundle is given on $(g, A)\in \Omega^1G= G\times {\rm Mat}_{n\times n}$ by
$$
(g, A)\mapsto (gt^{-1}, tA).
$$
On can easily calculate the moment map $\mu_T:\Omega^1G\to \ttt^*$ with respect to the action of $T$:
$$
\mu_T(g, A)={\rm diag}(Ag),
$$
where $\ttt^*$ is identified with diagonal matrices and diag means the diagonal of the matrix.
Via symplectic reduction, we have an identification $\Omega^1X$ with $\mu^{-1}(0)/T$.

The map $\phi : \Omega^1X\to Y$ is defined on an element $(g,A)\in \mu^{-1}(0)$ by the formula:
\begin{equation}\label{rga}
r_i=p_i(1+gA)=gq_ig^{-1}(1+gA).
\end{equation}
Note that $r_i$ is invariant with respect to the action of $T$. Hence the map is well-defined. Moreover, since the diagonal of $Ag$ is zero, it follows that:
$$
r_i^2=gq_ig^{-1}(1+gA)gq_ig^{-1}(1+gA)=gq_i(1+Ag)q_ig^{-1}(1+gA)=gq_ig^{-1}(1+gA)=r_i,
$$
i.e. $r_i$'s are projectors of rank 1.

\begin{Theorem}
Map $\phi :\Omega^1X\to Y$ is an open symplectic embedding.
\end{Theorem}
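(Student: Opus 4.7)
The plan is to establish the statement in four stages: well-definedness on the quotient, injectivity, compatibility of symplectic forms, and openness. Well-definedness is already implicit in the excerpt: the formula $r_i = gq_ig^{-1}(1+gA)$ is manifestly invariant under $(g,A)\mapsto(gt^{-1},tA)$ for $t\in T$, and the $r_i$ are rank-$1$ projectors precisely on $\mu_T^{-1}(0)$.

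For injectivity, I would recover $(g,A)$ modulo $T$ from the image tuple $\phi(g,A)=(r_1,\dots,r_n)$. Since $p_i = gq_ig^{-1}$, the lines $\mathrm{Im}(r_i) = \mathrm{Im}(p_i)$ are linearly independent and uniquely determine the complete system of algebraically orthogonal rank-$1$ projectors $\{p_i\}$; this determines $g$ up to right multiplication by the stabilizer $T$ of $\{q_i\}$. The identities $r_i - p_i = p_i\cdot gA$ and $\sum_i p_i = 1$ then give $gA = \sum_i(r_i - p_i)$, recovering $A$ from $g$, and the residual ambiguity $g\mapsto gt^{-1}$ is precisely matched by $A\mapsto tA$.

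The decisive step is the symplectic identity $\phi^*\omega_Y = \omega_X$. My approach is to lift $\phi$ to $\tilde\phi\colon T^*G\to Y$ and change coordinates from $(g,A)$ to $(g,h)$ with $h = g^{-1}+A$, so that $r_i = gq_ih$ and the constraint $\mathrm{diag}(Ag)=0$ becomes the clean $\mathrm{diag}(hg)=1$. Because $\mathrm{Tr}\bigl(g^{-1}dg\wedge g^{-1}dg\bigr)=0$ (the trace of the wedge-square of any matrix-valued 1-form vanishes), the canonical form $\omega_X = \mathrm{Tr}(dA\wedge dg)$ on $T^*G$ rewrites as $\mathrm{Tr}(dh\wedge dg)$ in the new coordinates. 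For each Kirillov-Kostant summand $\omega_i = -\mathrm{Tr}(r_i\,dr_i\wedge dr_i)$ on $\mathcal{O}_{r_i}$, I would expand $dr_i = dg\cdot q_ih + gq_i\cdot dh$ and use the rank-$1$ projector identity $rMr = \mathrm{Tr}(rM)\cdot r$ together with the constraint $(hg)_{ii}=1$ to collapse the nine arising cross-terms. Two of them vanish by the $\mathrm{Tr}(M\wedge M)=0$ rule; a third, $(hdg)_{ii}\wedge(dhg)_{ii}$, vanishes on the constraint because the two factors are scalar 1-forms that differ only by sign once $d\bigl((hg)_{ii}\bigr)=0$; the surviving contribution is the single term $-(dh\cdot dg)_{ii}$. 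Summation over $i$ yields $\tilde\phi^*\omega_Y = -\mathrm{Tr}(dh\wedge dg)$ on $\mu_T^{-1}(0)$, which descends through symplectic reduction to the canonical form on $\Omega^1X$ (up to the global sign fixed by the KKS convention).

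Openness is then automatic: $\phi$ is an injective symplectic map, hence an immersion, between smooth algebraic varieties of matching dimension $2(n^2-n)$, so injectivity plus the dimension count forces it to be an open embedding. I expect the main technical obstacle to be the Kirillov-Kostant bookkeeping in the paragraph above; the three simplifying levers that make the calculation tractable are the change of variables $A\leftrightarrow h$, the rank-$1$ identity $rMr=\mathrm{Tr}(rM)r$, and systematic use of $\mathrm{Tr}(M\wedge M)=0$ for matrix-valued 1-forms.
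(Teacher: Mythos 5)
Your proposal is correct and, at its core, takes the same route as the paper: verify directly that the pullback of the product Kirillov--Kostant form $\sum_i{\rm Tr}(r_i\,{\rm d}r_i\,{\rm d}r_i)$ under the substitution $r_i=gq_ig^{-1}(1+gA)$ equals the canonical form ${\rm Tr}({\rm d}A\,{\rm d}g)$. The difference is one of completeness: the paper dismisses this as ``a tedious but straightforward calculation (which we skip)'' and says nothing at all about injectivity or openness, whereas you actually carry out the computation and supply the missing pieces. Your change of variables $h=g^{-1}+A$ is exactly the right lever: it turns $r_i$ into the bilinear expression $gq_ih$ and the moment-map constraint into $(hg)_{ii}=1$, and with the rank-one identity $q_iMq_i=M_{ii}q_i$ the expansion of ${\rm Tr}(r_i\,{\rm d}r_i\,{\rm d}r_i)$ produces four cross-terms (not nine, a harmless miscount): two vanish because a scalar $1$-form wedged with itself is zero, one vanishes on the constraint via $(h\,{\rm d}g)_{ii}=-({\rm d}h\,g)_{ii}$, and the survivor is $({\rm d}h\,{\rm d}g)_{ii}$; summing over $i$ gives ${\rm Tr}({\rm d}h\wedge{\rm d}g)={\rm Tr}({\rm d}A\wedge{\rm d}g)$, since ${\rm Tr}\bigl(g^{-1}{\rm d}g\,g^{-1}{\rm d}g\bigr)=0$ --- precisely your outline, with the residual sign being only your choice of KKS normalization. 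Your injectivity argument (recover $\{p_i\}$ from the images of the $r_i$, then $gA=\sum_i(r_i-p_i)$ because $\sum_ip_i=1$) and the closing observation that an injective symplectic map between smooth varieties of equal dimension $2(n^2-n)$ is automatically an immersion and hence an open embedding are both sound, and they address parts of the statement the paper's proof does not touch.
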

\begin{proof}
The pull-back along $\phi$ of the symplectic form on $Y$ reads as:
$$
{\rm Tr}\sum_i r_i{\rm d}r_i{\rm d}r_i.
$$
It is a tedious but straightforward calculation (which we skip) to see that if we substitute $r_i$'s by formula (\ref{rga}), then the result coincides with the symplectic form (\ref {omegax}).
\end{proof}
{\bf Remark.} It would be interesting to have a more conceptual explanation for the coincidence of symplectic forms on $\Omega^1X$.

\section{Critical points of the potential and mirror symmetry}
Here we show that the solutions of our problem are critical points of a potential function, which can be chosen to be a Laurent polynomial. By mirror symmetry it corresponds a toric variety which we study here too.
\subsection{The potential}

Since fibers of the integrable system on $Y$ are Lagrangian and the map $\phi$ identifies the cotangent bundle to $X$ with the Zariski open set $U$ in $Y$, then the pre-images of the fibers of the integrable system along $\phi$ are Lagrangian subvarieties in the cotangent bundle. It is well-known that locally around points where a Lagrangian subvariety projects biholomorphically on $X$, it is identified with the section of the cotangent bundle, the differential of a potential function. Such a function might be multi-valued, if we maximally holomorphically extend it to $X$. Let us calculate this potential function.

The equation of the fiber $F_{\lambda_{ij}}$ is ${\rm Tr}(r_iq_j)=\lambda_{ij}$, or, in view of (\ref{rga}):
\begin{equation}\label{qalam}
{\rm Tr}(p_i(1+gA)q_j)={\rm Tr} (gq_ig^{-1}(1+gA)q_j)=\lambda_{ij}.
\end{equation}

Since $X=G/T$, a (local) holomorphic function $F$ on $X$ defines a (local) function on $G$:
$$
F=F(g)=F(\{g_{ij}\}).
$$
The chain rule ${\rm d}F=\sum_{ij}\frac{\partial F}{\partial g_{ij}}{\rm d}g_{ij}$ implies that
$$
{\rm d}F={\rm Tr}(A{\rm d}g)
$$
where $A$ is a matrix with coefficients
$$
a_{ij}=\frac{\partial F}{\partial g_{ji}}
$$

Denote by $\{g_{ij}\}$ the coefficients of the matrix $g$ and by $\{{\hat g}_{ij}\}$ those of the inverse matrix $g^{-1}$.
If $F$ is the potential of the fiber $F_{\lambda_{ij}}$ of the integrable system, then by plugging in the above formula for $A$ in the equations (\ref{qalam}), we obtain equations for $F$:
$$
g_{ji}{\hat g}_{ij}+g_{ji}\frac{\partial F}{\partial g_{ji}}=\lambda_{ij}.
$$
which imply:
\begin{equation}\label{fderivative}
\frac{\partial F}{\partial g_{ji}}= \frac{\lambda_{ij}}{g_{ji}}-{\hat g}_{ij}.
\end{equation}

Note that
$$
{\hat g}_{ij}=\frac{\partial ({\rm log}{\rm det}g)}{\partial g_{ji}}
$$
This implies that up to a constant
$$
F= \sum_{ij} \lambda_{ij}{\rm log}g_{ji}- {\rm log}{\rm det}g
$$
If we denote by $\Lambda$ the matrix with coefficients $\lambda_{ij}$, then we can write the above equation in the matrix form:
$$
F={\rm Tr}(\Lambda {\rm log}g)- {\rm log}{\rm det}g,
$$
which implies that the exponent of $F$ has the form:
$$
{\rm e}^F=\frac{\prod (g_{ij})^{\lambda_{ji}}}{{\rm det}g}
$$
According to what was explained before, we have
\begin{Proposition}
The critical points of this function are solutions to the problem of classification of two complete systems of orthogonal projectors $\{ p_i\}$ and $\{ q_j\}$ satisfying condition (\ref{trpq}).
\end{Proposition}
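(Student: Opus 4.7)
The plan is to translate the critical point equation $dF = 0$ into a Lagrangian intersection statement using the symplectic embedding $\phi$ constructed in the previous theorem, and then confirm by a short direct computation.

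First, recall the geometric picture. By the preceding theorem, $\phi:\Omega^1X\to Y$ is an open symplectic embedding whose zero section is $X=G/T\subset Y$, identified with the space of complete systems of orthogonal projectors $\{p_i=gq_ig^{-1}\}$; the image of $\phi$ is the open subset $U\subset Y$ of tuples of rank $1$ projectors with linearly independent images. The fiber $F_{\lambda_{ij}}$ of the integrable system on $Y$ is Lagrangian (being an orbit of the torus $T^k$ in the moment map picture). Its intersection with $U$ pulls back under $\phi$ to a Lagrangian subvariety of $\Omega^1X$, which, wherever it projects biholomorphically onto $X$, is the graph of a (locally defined, possibly multivalued) closed $1$-form on $X$. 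The calculation leading to (\ref{fderivative}) exhibits this $1$-form as $dF$ for the potential $F={\rm Tr}(\Lambda\,{\rm log}\,g)-{\rm log}\,{\rm det}\,g$.

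The key step is the following: a critical point of $F$ is by definition a point $g\in X$ where the section $dF$ meets the zero section of $\Omega^1X$. Transporting via $\phi$, such a point corresponds exactly to a point of $X\cap F_{\lambda_{ij}}$ inside $Y$. But a point of $X$ is a complete system of orthogonal projectors $\{p_i\}$, and membership of the corresponding tuple $\{r_i\}=\{p_i\}$ in $F_{\lambda_{ij}}$ is the condition ${\rm Tr}(p_iq_j)=\lambda_{ij}$, i.e.\ precisely (\ref{trpq}). Since any orthogonal system has images spanning $V$, every such solution lies in $U=\phi(\Omega^1X)$, so no solution is lost.

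As a direct verification, set the expressions in (\ref{fderivative}) to zero to obtain $g_{ji}\hat g_{ij}=\lambda_{ij}$ for all $(i,j)$. Writing $p_i=gq_ig^{-1}$ with $q_i$ the diagonal projector onto the $i$-th coordinate, the $(k,l)$-entry of $p_i$ is $g_{ki}\hat g_{il}$, so ${\rm Tr}(p_iq_j)=(p_i)_{jj}=g_{ji}\hat g_{ij}$. Thus the critical point equations coincide on the nose with the unbiasedness conditions ${\rm Tr}(p_iq_j)=\lambda_{ij}$. The only subtlety worth highlighting is the multivaluedness of $F$ (due to the $\log$), but this is harmless since $dF$ is single-valued and it is $dF=0$ that characterizes critical points; the open condition defining $U$ is automatically satisfied on $X$, so there is no further obstruction.
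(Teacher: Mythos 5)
Your proof is correct and follows essentially the same route as the paper, which simply appeals to the preceding construction of $F$ as the potential whose differential is the section of $\Omega^1X$ corresponding to the Lagrangian fiber $F_{\lambda_{ij}}$, so that critical points are the intersection of that fiber with the zero section $X$. Your closing direct computation (that $(p_i)_{jj}=g_{ji}\hat g_{ij}$, so the equations $\partial F/\partial g_{ji}=0$ are literally ${\rm Tr}(p_iq_j)=\lambda_{ij}$) is a welcome concrete check that the paper leaves implicit in its remark following the proposition.
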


Note that (\ref{fderivative}) implies that critical points of $F$ are solutions of the equations:
$$
\frac{\lambda_{ij}}{g_{ji}}={\hat g}_{ij}
$$

As ${\hat g}_{ij}$ are coefficients of the inverse matrix, the equation on coefficients of $g$ are:
\begin{equation}
\sum^{n}_{j=1}\frac{\lambda_{jk}g_{ij}}{g_{kj}} = 0.
\end{equation}
for all $i \ne k$.

\subsection{A Laurent polynomial and Landau-Ginzburg model potentials}
Denote by $G$ the inverse of the exponent for the potential $F$:
$$
E={\rm e}^{-F}=\frac{{\rm det}g}{\prod (g_{ij})^{\lambda_{ji}}}
$$
Note that critical points of this function are the same as those of $F$.

Function $E$ is a multi-valued function on the $n^2$ dimensional torus $({\mathbb C}^*)^{n^2}$ with coordinates $g_{ij}$. It is invariant with respect to the action of the diagonal torus $T$ on both sides of the matrix $g$. Thus, we can consider $E$ as a function on the torus $T\setminus ({\mathbb C}^*)^{n^2}/T$ of dimension $(n-1)^2$.

Now we will concentrate on the case when $\lambda_{ij}=\frac 1n$, for all $(ij)$. This corresponds to the case of algebraically mutually unbiased bases. Function $E$ reads:
\begin{equation}\label{e1n}
E=\frac{{\rm det}g}{\prod (g_{ij})^{\frac 1n}}
\end{equation}
Consider the isogenic $n$-cover ${\mathbb T}\to T\setminus ({\mathbb C}^*)^{n^2}/T$ by another torus ${\mathbb T}$ of dimension $(n-1)^2$ defined by adding to the holomorphic functions the $n$-th root of $\prod g_{ij}$. Thus we have a new multiplicative coordinate on this torus:
$$
z=\prod (g_{ij})^{\frac 1n}
$$
The advantage of the new torus is that the lift of $E$ to it (which we denote by the same letter $E$) is a Laurent polynomial in multiplicative coordinates on the torus. The right and left action of torus $T$ lifts to the $n$-fold cover and $E$ remains invariant with respect to it. Thus we can again diminish the dimension of the torus to $(n-1)^2$.

{\bf Example.}
Let $n=2$. After killing the action of $T\times T$, the matrix $g$ has the form:
\begin{equation}
g = \left(
\begin{array}{cc}
1 & 1\\
1 & x
\end{array}
\right)
\end{equation}

Hence $z^2=x$, and $E=z-\frac 1z$. After changing the coordinate $z$ for $iz$ one get that $E=i(z+\frac 1z)$. In other words, $E$ is (up to a multiplier) the Landau-Ginzburg potential corresponding to the projective line ${\mathbb P}^1$, the target space for the super-symmetric sigma-model.

The potential has finite number of critical points for $n=2,3, 5$. There is a curve of critical points for $n=4$, and a 4-dimensional family for $n=6$ (see \cite{BZh1}). In view of Winnie-the-Pooh Conjecture, one can assume that the structure of critical points depend on the prime number decomposition for $n$. There is a counter-example to the conjecture that the number of critical points is always finite for prime $n$: there is a 1-parameter family of critical points for $n=7$.

\subsection{The toric variety of Birkhoff polytope}

According to the prescription of mirror symmetry, we consider the toric variety $X_E$ whose defining fan has cones spanned by faces of the Newton polytope of the function $E$ on the torus ${\mathbb T}$ introduced above.

The Newton polytope is in fact the classically known Birkhoff polytope, the convex hull of permutation matrices in the space of real square matrices. We denote the {\em real} coordinates in the space of matrices by $\lambda_{ij}$ (this notation might look strange, but it has the reason that this coordinates are indeed dual to $g_{ij}$). Then Birkhoff polytope lies in the affine subspace $L$ defined by equations (\ref{sumi}) and (\ref{sumj}), which reflects the fact that the torus ${\mathbb T}$ has dimension $(n-1)^2$. Let ${\tilde N}$ be the lattice generated by all integer matrices and the matrix ${\Lambda}_o$ with all coefficients equal to $\frac 1n$. Define an integer lattice  $N$ in $L$ to be $L\cap {\tilde N}$. This is the lattice of characters of our torus ${\mathbb T}$ if we take the origin to be matrix ${\Lambda}_o$. The shift of the origin is justified by the denominator in (\ref{e1n}).

According to Birkhoff-von Neumann theorem, Birkhoff polytope is the intersection of $L$ with the set of matrices with non-negative coefficients. In other words, it is the locus of double-stochastic matrices. It was intensively studied and has many applications. There are $n^2$ facets of the polytope, given by equation $\lambda_{ij}=0$, for a particular choice of $(ij)$.

\begin{Proposition}
Variety $X_E$ is a Gorenstein toric Fano variety with terminal singularities.
\end{Proposition}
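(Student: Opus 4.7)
The plan is to invoke the standard combinatorial criterion: the toric variety $X_P$ defined by the spanning fan of a polytope $P \subset N_{\RR}$ containing the origin in its interior is Gorenstein Fano precisely when $P$ is reflexive (i.e.\ a lattice polytope all of whose facets lie at lattice distance one from the origin), and it has at worst terminal singularities precisely when the only lattice points of $P$ are the origin and the vertices. In our situation $P$ is the Birkhoff polytope translated so that $\Lambda_o$ is the origin, sitting in the affine subspace $L$ with lattice $N = L \cap \tilde{N}$.

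For reflexivity, the vertices of $P$ are $e_\sigma := \sigma - \Lambda_o$ for $\sigma \in S_n$, and both $\sigma$ and $\Lambda_o$ lie in $L \cap \tilde{N}$, so $e_\sigma \in N$. To see each $e_\sigma$ is primitive, assume $e_\sigma = c\,v$ with $v \in N$ and $c \geq 2$ an integer; then $v + \Lambda_o = \tfrac{1}{c}\sigma + \tfrac{c-1}{c}\Lambda_o$ must equal $A + \tfrac{k}{n}J$ for some integer matrix $A$ and integer $k$, but comparing an entry where $\sigma_{ij} = 1$ with one where $\sigma_{ij}=0$ produces a difference of $\tfrac{1}{c}$, which forces $c = 1$. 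For the facets, each of the $n^2$ of them is $\{\lambda_{ij}=0\}$, which in shifted coordinates becomes $\{v_{ij} = -\tfrac{1}{n}\}$, so the linear functional $m_{ij}(v) := -n\,v_{ij}$ takes the value $1$ on the facet. Parameterizing any $v \in N$ as $A + \tfrac{k-1}{n}J$ with $A$ an integer matrix of row and column sums $1-k$, one computes $m_{ij}(v) = (1-k) - n A_{ij} \in \ZZ$, so $m_{ij} \in M$; and since $m_{ij}(e_\sigma) = 1$ for any $\sigma$ with $\sigma_{ij} = 0$, the functional $m_{ij}$ is primitive in $M$. Thus $P$ is reflexive and $X_E$ is Gorenstein Fano.

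For terminality I would enumerate the lattice points of $P$ inside $\tilde{N}$ using the same parameterization $A + \tfrac{k}{n}J$, with $A$ integer, row and column sums of $A$ equal to $1-k$, subject to the entrywise bounds $0 \leq A_{ij} + \tfrac{k}{n} \leq 1$ coming from the Birkhoff polytope. Writing $k = mn + r$ with $m \in \ZZ$ and $0 \leq r < n$, the admissible interval $[-\tfrac{k}{n},\, 1 - \tfrac{k}{n}]$ for $A_{ij}$ contains the two integers $\{-m,\,1-m\}$ when $r=0$ and only $\{-m\}$ when $r \geq 1$. In the first case the row and column sum conditions force exactly one entry $1-m$ per row and per column, yielding $A + \tfrac{k}{n}J = \sigma'$ for some permutation $\sigma'$; in the second case $A$ is constant $-m$, and matching the row sum forces $r = 1$, giving $A + \tfrac{k}{n}J = \Lambda_o$. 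Hence the lattice points of $P$ in $N$ are exactly the origin and the vertices $e_\sigma$, which establishes terminality.

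The one piece of real work is the case analysis in the final step, in particular ensuring nothing is missed when enumerating lattice points in the enlarged lattice $\tilde{N}$ rather than just the standard integer matrices; the remaining ingredients (primitivity of vertices, integrality and primitivity of facet normals) are direct computations on the parameterization above.
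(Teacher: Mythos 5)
Your proof is correct and follows essentially the same route as the paper: reflexivity is verified via the facet functional $-n v_{ij}=1-n\lambda_{ij}$ (the paper's $l_{ij}=n\lambda_{ij}-1$ up to sign), and terminality by classifying the points of $\tilde N$ with all entries in $[0,1]$ lying in $L$, which you do by explicit case analysis on $k \bmod n$ where the paper phrases it as containment in the unit hypercube. Your extra checks (primitivity of the vertices $e_\sigma$ and of the facet normals) are harmless refinements of the same argument.
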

\begin{proof}
Function $l_{ij}=n\lambda_{ij}-1$ is zero at the origin, integer-valued on ${\tilde N}$ and has value $-1$ on the facet $\lambda_{ij}=0$.
This means that it defines an integer vertex of the dual polytope. Hence, all vertices of the dual polytope are integer, i.e. Birkhoff polytope is reflexive.
In terms of toric geometry, this means that our toric variety $X_E$ is Fano and Gorenstein.

Clearly, the polytope lies in the unit hypercube $[0,1]^{n^2}$. The only points of the lattice ${\tilde N}$ in the unit cube are vertices of the cube and integer multiples of the matrix ${\Lambda}_o$ with multiplier up to $n$. As Birkhoff polytope lies in $L$, this implies that the only integer points in Birkhoff polytope are ${\Lambda}_o$ and the vertices of the polytope. This means that the toric variety has terminal singularities.
\end{proof}

The toric variety $X_E$ is ${\mathbb P}^1$, for $n=2$, it is ${\mathbb P}^2\times {\mathbb P}^2$, for $n=3$, and it is singular for higher $n$.
The geometry of this toric variety as well as critical points of the potential deserve further study.

\def\cprime{$'$}
\ifx\undefined\bysame
\newcommand{\bysame}{\leavevmode\hbox to3em{\hrulefill}\,}
\fi

\end{document}